\newtheorem{thm}{Theorem}
\newtheorem{exam}{Example}
\begin{document}
\title{Approximation of the weighted maximin dispersion problem over $\ell_p-$ball: SDP relaxation
is misleading
\thanks{This research was supported  by National Natural Science Foundation of China
under grants 11471325 and 11571029, and by fundamental research funds
 for the Central Universities under
grant YWF-16-BJ-Y-11.}
}

\titlerunning{Weighted maximin dispersion problem over $\ell_p$-ball}        

\author{Zuping Wu\and Yong Xia\footnote{Corresponding author}  \and Shu Wang }


\institute{   Z. Wu, Y. Xia, S. Wang        \at
State Key Laboratory of Software Development
              Environment, LMIB of the Ministry of Education,
              School of
Mathematics and System Sciences, Beihang University, Beijing,
100191, P. R. China
              \email{wuzuping2008@sina.com(Z. Wu); dearyxia@gmail.com(Y. Xia);
              wangshu.0130@163.com(S. Wang)}
 }

\date{Received: date / Accepted: date}

\maketitle

\begin{abstract}
Consider the problem of finding a point in a unit $n$-dimensional $\ell_p$-ball ($p\ge 2$) such that the minimum of the weighted Euclidean distance from given $m$ points is maximized. We show in this paper that the recent SDP-relaxation-based approximation algorithm [SIAM J. Optim. 23(4), 2264-2294, 2013] will not only provide the first theoretical  approximation bound of $\frac{1-O\left(\sqrt{ \ln(m)/n}\right)}{2}$, but also perform much better in practice, if  the SDP relaxation is removed and  the optimal solution of the SDP relaxation is replaced by a simple scalar matrix.
\keywords{ Maximin dispersion\and Convex relaxation \and Semidefinite programming\and Approximation algorithm}
\subclass{90C20\and 90C26 \and 90C47 \and 68W25}
\end{abstract}

\section{Introduction}
As is well-known, following the pioneer work on providing a $0.878$-approximate solution for max-cut problem \cite{GW95}, the semidefinite programming (SDP) relaxation technique has been playing a great role in approximately solving  combinatorial optimization problems and nonconvex quadratic programs; see for example, \cite{GJ,Luo,Nem,Ne,Ye}.

This paper is to present a surprise case where the SDP relaxation misleads the approximation in both theory and computation.

Consider the  $\ell_p$-ball ($p\geq 2$) constrained weighted maximin dispersion problem:
\begin{eqnarray*}
{\rm (P)}~~\max_{\|x\|_p\le 1}\left\{ f(x):=\min_{i=1,\ldots,m} \omega_i\|x-x^i\|_2^2\right\},
\end{eqnarray*}
where
$x^1,\ldots,x^m\in \Bbb R^n$ are given $m$ points, $\omega_i>0$ for $i=1,\ldots,m$, and $\|x\|_p:=\left(\sum_{i=1}^n|x_i|^p\right)^{\frac{1}{p}}$ is the $\ell_p$-norm of $x$. Applications of (P) can be found in facility location, spatial management, and pattern recognition; see \cite{DW,JMY,Sc,W}
and references therein.

Based on the SDP relaxation technique, Haines et al. \cite{HA13} proposed the first approximation algorithm for solving (P). \footnote{Their algorithm is actually proposed for the weighted maximin dispersion problem with a more general constraint.} However, their approximation bound is not so clean that it depends on the optimal solution of the SDP relaxation. Fortunately, when $p=+\infty$, the approximation bound reduces to
\begin{equation}
\frac{1-O\left(\sqrt{ \frac{\ln(m)}{n}}\right)}{2}. \label{bd}
\end{equation}
Very recently, the above approximation bound (\ref{bd}) is established for the special case $p=2$ based on a different algorithm \cite{WX16}. But further extension to (P) with $p>2$  remains open  \cite{WX16}.

In this paper,  we show that, by removing the SDP relaxation from Haines et al.'s approximation algorithm \cite{HA13} and simply replacing the optimal solution of the SDP relaxation with a scalar matrix,
the approximation bound (\ref{bd}) becomes to be satisfied for (P). It is the SDP relaxation that makes the whole approximation algorithm not only loses the theoretical bound (\ref{bd}) but also performs poorly in practice.

The remainder of this paper is organized as follows. In Section 2, we present the existing approximation algorithm based on SDP relaxation. In Section 3, we propose a new simple approximation algorithm without any convex relaxation and establish the approximation bound. Numerical comparison is reported in Section 4. We make conclusions in Section 5.

Throughout this paper, we denote by $\Bbb R^n$ and $S^n$ the $n$-dimensional real vector space and the space of $n\times n$ real symmetric matrices,
respectively. Let $I_n$ be the identity matrix of order $n$. $A\succ(\succeq)0$ denotes that $A$  is positive (semi)definite. The inner product of two matrices $A$ and $B$ is denoted by $A\bullet B={\rm Tr}(AB^T)=\sum_{i=1}^n\sum_{j=1}^na_{ij}b_{ij}$. ${\rm Pr(\cdot)}$ stands for the probability.

\section{Approximation algorithm based on SDP relaxation}
In this section, we present Haines et al.'s randomized approximation algorithm \cite{HA13} based on SDP relaxation.

It is not difficult to verify that lifting $xx^T$ to $X\in S^{n}$ yields the SDP relaxation for (P):
\begin{eqnarray*}
{\rm (SDP)}~~&\max_{X, x}&~\min\left\{
\omega_i  \left(\begin{array}{cc}I_n & -x^i\\-(x^i)^T & \|x^i\|_2^2\end{array}\right)\bullet \left(\begin{array}{cc}X & x\\x^T & 1\end{array}\right)\right\} \\
&{\rm s.~t.}&
X_{11}^{\frac{p}{2}}+X_{22}^{\frac{p}{2}}+\ldots+X_{nn}^{\frac{p}{2}}\leq 1, \\
&& \left(\begin{array}{cc}X & x\\x^T & 1\end{array}\right)\succeq 0.
\end{eqnarray*}
Since it is assumed that $p\ge 2$, the above (SDP) is a convx program and hence can be solved efficiently \cite{NN}.

Now, we present Haines et al.'s SDP-based approximation algorithm \cite{HA13}
for solving (P).

~

\begin{center}
\fbox{\shortstack[l]{
{\bf Algorithm 1: SDP-based approximation algorithm \cite{HA13}.}\\
1.~ Input $\rho\in(0,1)$ and $x^i$ for $i=1,\ldots,m$. Let $\alpha=\sqrt{2\ln(m/\rho)}$. \\
2.~
 Solve ${\rm (SDP)}$  and return the optimal solution $X^*\in S^{n}$. Set\\~~~~~ $b^i=(\sqrt{X^*_{11}}x^i_1,\ldots,\sqrt{X^*_{nn}}x^i_n)^T$  for $i\in\{1,\ldots,m\}\setminus\{k\mid\|x^k\|=0\}$.\\
3.~    Repeatedly generate $\xi=(\xi_1,\ldots,\xi_n)^T$ with independent $\xi_i$ taking \\~~~~~the value $\pm1$ with equal probability until  $(b^i)^T\xi<\alpha\|b^i\|$ for \\~~~~~ $i\in\{1,\ldots,m\}\setminus\{k\mid\|x^k\|=0\}$.\\
4.~ Output
$\widetilde{x}=\left(\sqrt{{X^*_{11}}}\xi_1,\sqrt{{X^*_{22}}}\xi_2,\ldots,\sqrt{{X^*_{nn}}}\xi_n\right)^T$.
}}
\end{center}

~

\begin{remark}
The original version of the above algorithm \cite{HA13} did not consider the possible case $\|x^k\|=0$ for some $k$. It has been fixed in \cite{WX16}.
\end{remark}

\begin{remark}
The existence of $\xi$  in Step 3 of Algorithm $1$ is guaranteed by the inequality \cite{HA13}:
\[
{\rm Pr}\left((b^i)^T\xi<\alpha\|b^i\|,~i\in I \right)\ge 1-\rho>0,
\]
which is a trivial corollary of the following well-known result.
\begin{thm} {\rm \cite[Lemma A.3]{Ben2002}}\label{thm2}
Let $\xi\in \{-1,1\}^n$ be a random vector, componentwise independent, with
\[
{\rm Pr}(\xi_j=1)={\rm Pr}(\xi_j=-1)=\frac{1}{2},~\forall j=1,\ldots,n.
\]
Let $b\in \Bbb R^n$ and $\|b\|> 0$. Then, for any $\alpha>0$,
\[
{\rm Pr}(b^T\xi\geq \alpha\|b\|)\leq e^{-\alpha^2/2}.
\]
\end{thm}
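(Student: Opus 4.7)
The plan is to apply the classical Chernoff/exponential-moment method, which is the standard route to such sub-Gaussian tail bounds. The key observation is that because $\xi$ takes only the values $\pm 1$, each coordinate $\xi_j$ is automatically centered and bounded, so its moment generating function admits a sharp Gaussian majorant.

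First, for any $t>0$, I would apply Markov's inequality to the nonnegative random variable $e^{t b^T \xi}$:
\begin{equation*}
\Pr(b^T\xi \geq \alpha \|b\|) \;=\; \Pr\!\left(e^{t b^T\xi} \geq e^{t\alpha\|b\|}\right) \;\leq\; e^{-t\alpha\|b\|}\, \mathbb{E}\!\left[e^{t b^T\xi}\right].
\end{equation*}
By the componentwise independence of $\xi$, the expectation factorizes as $\prod_{j=1}^n \mathbb{E}[e^{tb_j\xi_j}] = \prod_{j=1}^n \cosh(tb_j)$.

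The next step is the key inequality $\cosh(y) \leq e^{y^2/2}$ for all $y \in \mathbb{R}$, which follows by a term-by-term comparison of the Taylor series: $\cosh(y)=\sum_{k\ge 0} y^{2k}/(2k)!$ and $e^{y^2/2}=\sum_{k\ge 0} y^{2k}/(2^k k!)$, together with the elementary fact $(2k)! \geq 2^k k!$ (since $(2k)!/(2^k k!) = 1\cdot 3\cdot 5 \cdots (2k-1) \geq 1$). Applying this coordinatewise yields
\begin{equation*}
\mathbb{E}\!\left[e^{t b^T\xi}\right] \;\leq\; \prod_{j=1}^n e^{t^2 b_j^2/2} \;=\; e^{t^2 \|b\|^2/2}.
\end{equation*}

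Finally, I would optimize the bound $\Pr(b^T\xi \geq \alpha\|b\|) \leq \exp(-t\alpha\|b\| + t^2\|b\|^2/2)$ over $t>0$. The quadratic in the exponent is minimized at $t^* = \alpha/\|b\|$ (which is valid since $\|b\|>0$ by hypothesis), and substituting gives the advertised bound $e^{-\alpha^2/2}$. The only nontrivial ingredient is the $\cosh$ inequality; everything else is routine. I do not expect any real obstacle here, since this is a textbook Hoeffding-type argument; the proof is short and entirely elementary.
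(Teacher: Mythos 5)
Your proof is correct and complete: Markov's inequality applied to $e^{tb^T\xi}$, factorization by independence into $\prod_j\cosh(tb_j)$, the termwise Taylor comparison $\cosh(y)\le e^{y^2/2}$, and optimization at $t^*=\alpha/\|b\|$ all check out, yielding exactly $e^{-\alpha^2/2}$. The paper itself gives no proof — it simply cites the result as Lemma A.3 of Ben-Tal, Nemirovski, and Roos — and your argument is the standard Hoeffding-type derivation used there, so there is nothing to add.
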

\end{remark}

For the approximation bound of Algorithm 1, the following main result holds.
\begin{thm} {\rm\cite[Theorem 3]{HA13}}\label{thm0}
For the solution $\widetilde{x}$ returned by the Algorithm 1,  we have
\[
v({\rm P})\ge f(\widetilde{x})\geq \frac{1-\sqrt{2\gamma_1^*\ln(m/\rho)} }{2}v({\rm P}).
\]
where $\gamma_1^*=\frac{\max_{j=1,\ldots,n}X^*_{jj}}{\sum_{j=1}^nX^*_{jj}}$. Moreover, when $p=+\infty$, $\gamma_1^*=\frac{1}{n}$.
\end{thm}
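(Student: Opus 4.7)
The plan is to split the theorem into the upper bound $v(\mathrm{P}) \ge f(\widetilde{x})$, which will follow from feasibility of $\widetilde{x}$, and the lower bound, which I would derive by comparing $\|\widetilde{x}-x^i\|_2^2$ against the $i$-th summand of the (SDP) objective evaluated at the relaxation optimum $(X^*,x^*)$. Feasibility is immediate from $\widetilde{x}_j^2 = X^*_{jj}$ and the first constraint of (SDP), which gives $\|\widetilde{x}\|_p^p = \sum_j (X^*_{jj})^{p/2} \le 1$.

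For the lower bound, I would expand
\[
\|\widetilde{x}-x^i\|_2^2 \;=\; \textstyle\sum_j X^*_{jj} \;-\; 2(b^i)^T\xi \;+\; \|x^i\|_2^2
\]
using $\xi_j^2=1$ and $b^i_j = \sqrt{X^*_{jj}}\,x^i_j$. Setting $s^2 := \sum_j X^*_{jj}$ and $t_i := \|x^i\|_2$, the Step~3 guarantee $(b^i)^T\xi < \alpha\|b^i\|$, combined with $\|b^i\|^2 \le (\max_j X^*_{jj})\,t_i^2 = \gamma_1^*\, s^2\, t_i^2$, controls the cross term; a short algebraic step (equivalent to $\alpha\sqrt{\gamma_1^*}(s-t_i)^2 \ge 0$) then produces
\[
\|\widetilde{x}-x^i\|_2^2 \;\ge\; s^2 + t_i^2 - 2\alpha\sqrt{\gamma_1^*}\,s t_i \;\ge\; (1-\alpha\sqrt{\gamma_1^*})(s^2+t_i^2).
\]

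To connect this to the $i$-th SDP summand $T_i := \omega_i(s^2 - 2(x^i)^T x^* + t_i^2)$, I would use that the PSD block constraint forces $\|x^*\|_2 \le s$, so Cauchy--Schwarz yields $|2(x^i)^T x^*| \le 2 s t_i \le s^2 + t_i^2$ and hence $T_i \le 2\omega_i(s^2+t_i^2)$. Taking the minimum over $i$ and substituting $\alpha = \sqrt{2\ln(m/\rho)}$ then delivers
\[
f(\widetilde{x}) \;\ge\; \frac{1-\alpha\sqrt{\gamma_1^*}}{2}\, v(\mathrm{SDP}) \;\ge\; \frac{1-\sqrt{2\gamma_1^*\ln(m/\rho)}}{2}\, v(\mathrm{P}),
\]
because $v(\mathrm{SDP}) \ge v(\mathrm{P})$ by construction.

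For the final clause, when $p=+\infty$ the constraint $\sum_j X_{jj}^{p/2}\le 1$ degenerates to $X_{jj}\le 1$ for each $j$; each diagonal enters every SDP summand additively and with positive coefficient, while the block PSD constraint imposes no upper bound on the diagonals, so there exists an optimum with $X^*_{jj}=1$ for all $j$, yielding $\gamma_1^*=1/n$. The delicate point I anticipate is choosing the right intermediate comparison quantity $s^2+t_i^2$ that simultaneously matches the Step~3 randomization bound (through $s t_i$) and the SDP summand $T_i$ (through the PSD-induced bound $\|x^*\|_2 \le s$): any weaker intermediate would be amplified into a worse leading constant, whereas this choice is precisely what makes the final factor $1/2$ and the $\sqrt{2\gamma_1^*\ln(m/\rho)}$ inside fall out as advertised.
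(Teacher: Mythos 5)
Your proof is correct. Note that the paper itself offers no proof of this statement --- it is quoted from \cite{HA13} --- but your argument is the standard one and is structurally identical to the paper's own proof of Theorem~\ref{thm1} for Algorithm~2: the same expansion of $\|\widetilde{x}-x^i\|_2^2$ with the Step~3 bound on the cross term, the same $(s-t_i)^2\ge 0$ step yielding the factor $(1-\alpha\sqrt{\gamma_1^*})(s^2+t_i^2)$, and the same Cauchy--Schwarz upper bound $2\omega_i(s^2+t_i^2)$ on the $i$-th summand, with the only adaptation being that the comparison quantity is the SDP objective (so $\|x^*\|_2\le s$ comes from the PSD block rather than from H\"{o}lder) and $v(\mathrm{SDP})\ge v(\mathrm{P})$ closes the chain.
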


Finally, we remark that the equality $\gamma_1^*=\frac{1}{n}$ is no longer true when $p<+\infty$. The following counterexample is taking from \cite{WX16}.
\begin{exam}{\rm\cite[Example 4.1]{WX16}}
Let $n=2,~m=3$, $p=2$, $x^1=\left(\begin{array}{c}1\\2\end{array}\right)$, $x^2=\left(\begin{array}{c}2\\3\end{array}\right)$,
$x^3=\left(\begin{array}{c}1\\5\end{array}\right)$, and $\omega_1=\omega_2=\omega_3=1$.  
Solving (SDP), we obtain
\[
\gamma_1^*=0.8> 0.5=\frac{1}{n}.
\]
Moreover, the above value of $\gamma_1^*$ is unique.
For the detail of the verification, we refer the reader to \cite{WX16}.
\end{exam}

\section{Approximation algorithm without convex relaxation}

In this section, we propose a simple randomized approximation algorithm for solving (P). We remove the SDP relaxation from Algorithm 1 and then replace the optimal solution $X^*$ with the scalar matrix $n^{-\frac{2}{p}}I_{n}$. It turns out that our new algorithm just uniformly and randomly pick a solution from $\{n^{-\frac{1}{p}}(t_1,t_2,\ldots,t_n)^T\mid~ t_i=-1~{\rm or}~1,~i=1,\ldots,n\}$, which is a set of $2^n$ points on the surface of the unit $\ell_p$-ball. The detailed algorithm is as follows.

~

\begin{center}
\fbox{\shortstack[l]{
{\bf Algorithm 2: Approximation algorithm without convex relaxation.}\\
1.~ Input $\rho\in(0,1)$ and $x^i$  for $i=1,\ldots,m$. Let $\alpha=\sqrt{2\ln(m/\rho)}$. \\
 2.~ Repeatedly generate $\xi\in\Bbb R^n$ with independent $\xi_i$ taking  $\pm1$ with equal \\~~~~~probability until  $(x^i)^T\xi<\alpha\|x^i\|$ for $i\in\{1,\ldots,m\}\setminus\{k\mid\|x^k\|=0\}$.\\
3.~ Output $\widetilde{x}=n^{-\frac{1}{p}}\xi.$
}}
\end{center}

~

Surprisingly, for any $p\ge 2$, we can show that our new Algorithm 2 always provides the approximation bound (\ref{bd}) for (P).
\begin{thm}\label{thm1}
Let $p\ge 2$. For the solution $\widetilde{x}$ returned by Algorithm 2,  we have
\[
 v({\rm P})\geq f(\widetilde{x})>\frac{1-\sqrt{\frac{2}{n}\ln(m/\rho)}}{2}\cdot v({\rm P}).
\]
\end{thm}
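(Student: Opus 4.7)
The plan is to bound $f(\widetilde{x})$ below by comparing it to $v(\mathrm{P})$ through a triangle-inequality estimate of $\|x^*-x^i\|_2$ at an optimal $x^*$, combined with the filtering condition on $\xi$ in Step 2 of Algorithm 2.

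First I would check feasibility: since $\xi\in\{-1,1\}^n$, $\|\widetilde{x}\|_p = n^{-1/p}\|\xi\|_p = n^{-1/p}\cdot n^{1/p}=1$, so $v(\mathrm{P})\ge f(\widetilde{x})$. Next, the two key quantities. Let $a:=n^{1/2-1/p}$ and, for each $i$, $b_i:=\|x^i\|_2$. By H\"older's inequality, any $x$ with $\|x\|_p\le 1$ satisfies $\|x\|_2\le a$; in particular $\|x^*\|_2\le a$ (with equality for $\widetilde{x}$). Then for any $i$,
\[
\|x^*-x^i\|_2\le \|x^*\|_2+\|x^i\|_2\le a+b_i,
\]
so $\omega_i(a+b_i)^2\ge \omega_i\|x^*-x^i\|_2^2\ge v(\mathrm{P})$. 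On the other hand, $\|\widetilde{x}\|_2^2=n^{1-2/p}=a^2$, and the filtering condition $(x^i)^T\xi<\alpha\|x^i\|$ (for $b_i>0$) yields $(x^i)^T\widetilde{x}<n^{-1/p}\alpha b_i$, hence
\[
\|\widetilde{x}-x^i\|_2^2 > a^2-2n^{-1/p}\alpha b_i+b_i^2 = a^2-2\beta a b_i+b_i^2,
\]
where $\beta:=\alpha/\sqrt{n}=\sqrt{2\ln(m/\rho)/n}$ (using $n^{-1/p}\alpha=\beta a$).

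The core step is the elementary inequality
\[
a^2-2\beta ab+b^2 \ge \tfrac{1-\beta}{2}(a+b)^2 \qquad (a,b\ge 0),
\]
which I would verify by expanding: the difference equals $\tfrac{1+\beta}{2}(a-b)^2\ge 0$. Combining this with the two displayed bounds gives, for each $i$ with $b_i>0$,
\[
\omega_i\|\widetilde{x}-x^i\|_2^2 > \tfrac{1-\beta}{2}\,\omega_i(a+b_i)^2\ \ge\ \tfrac{1-\beta}{2}\,v(\mathrm{P}).
\]
For the remaining indices with $\|x^i\|_2=0$, $\omega_i\|\widetilde{x}-x^i\|_2^2=\omega_i a^2\ge v(\mathrm{P})\ge \tfrac{1-\beta}{2}v(\mathrm{P})$ directly. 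Taking the minimum over $i$ delivers the desired bound.

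No step looks truly hard once one chooses to compare to $x^*$ through the crude bound $\|x^*\|_2\le a$; the main conceptual obstacle is recognizing the right algebraic identity (the $(1+\beta)(a-b)^2$ factorization) that couples the lower bound on $\|\widetilde{x}-x^i\|_2^2$ to the upper bound on $\|x^*-x^i\|_2^2$. The strictness of the final inequality is inherited from the strict inequality in the filtering condition in Step 2.
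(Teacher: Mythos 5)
Your proof is correct and follows essentially the same route as the paper: H\"older's inequality gives $\|x^*\|_2\le n^{\frac{1}{2}-\frac{1}{p}}=a$, the filtering condition in Step 2 lower-bounds $\|\widetilde{x}-x^i\|_2^2$, and an elementary quadratic inequality couples this to the upper bound on $\omega_i\|x^*-x^i\|_2^2$. The only cosmetic difference is that you pivot on $(a+b_i)^2$ via the identity $a^2-2\beta ab+b^2-\frac{1-\beta}{2}(a+b)^2=\frac{1+\beta}{2}(a-b)^2$, whereas the paper pivots on $a^2+b_i^2$ using $2ab_i\le a^2+b_i^2$ twice; both yield the identical final bound.
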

\begin{proof}
According to the settings in Algorithm 2, for $i\in\{1,\ldots,m\}$ such that $\|x^i\|>0$,
we have \begin{eqnarray}
\|\widetilde{x}-x^i\|_2^2&=&\|\widetilde{x}\|_2^2-2(x^i)^T\widetilde{x}+\|x^i\|_2^2\nonumber\\
&\geq &\|\tilde{x}\|_2^2-2\alpha\|x^i\|_2\cdot n^{-\frac{1}{p}}+\|x^i\|_2^2\nonumber\\
&=&n^{1-\frac{2}{p}}-2\alpha\|x^i\|_2\cdot n^{-\frac{1}{p}}+\|x^i\|_2^2\nonumber\\
&\ge&(n^{1-\frac{2}{p}}+\|x^i\|_2^2)(1-\alpha\cdot n^{-\frac{1}{2}}),\label{num:1}
\end{eqnarray}
where the inequality (\ref{num:1}) holds since according to the Cauchy-Schwarz inequality, it holds that
\begin{eqnarray}
2\|x^i\|_2\cdot n^{\frac{1}{2}-\frac{1}{p}}\leq n^{1-\frac{2}{p}}+\|x^i\|_2^2.\label{num:0}
\end{eqnarray}
If there is an index $k$ such that $\|x^k\|_2=0$, we have
\begin{equation}
\|\tilde{x}-x^k\|_2^2=n^{1-\frac{2}{p}}>(n^{1-\frac{2}{p}}+\|x^k\|_2^2)(1-\alpha\cdot n^{-\frac{1}{2}}).\label{num:00}
\end{equation}
Next, let $x^*$ be an optimal solution of (P). Then, for $i=1,\ldots,m$ and $\|x^i\|>0$,
it holds that
\begin{eqnarray}
v({\rm P})&\le&{\omega_i}(\|x^*\|_2^2-2(x^i)^Tx^*+\|x^i\|_2^2)\nonumber\\
&\leq&{\omega_i}(n^{1-\frac{2}{p}}-2(x^i)^Tx^*+\|x^i\|_2^2)\label{num:11}\\
&\le&{\omega_i}(n^{1-\frac{2}{p}}+2\|x^i\|_2\cdot\|x^*\|_2+\|x^i\|_2^2)\label{num:4}\\
&\le&{\omega_i}(n^{1-\frac{2}{p}}+2\|x^i\|_2n^{\frac{1}{2}-\frac{1}{p}}+\|x^i\|_2^2)\nonumber\\
&\leq&2{\omega_i}(n^{1-\frac{2}{p}}+\|x^i\|_2^2)\label{num:5},
\end{eqnarray}
where the inequality (\ref{num:11}) holds since it follows from the H\"{o}lder inequality that
\[
\|x^*\|_2^2
\leq \left(\sum_{i=1}^n(x_i^{*2})^{\frac{p}{2}}\right)^{\frac{2}{p}}\left(\sum_{i=1}^n 1^{\frac{p}{p-2}}\right)^{1-\frac{2}{p}}=\left(\sum_{i=1}^n|x^*_i|^{p}\right)^{\frac{2}{p}}
n^{1-\frac{2}{p}}\le n^{1-\frac{2}{p}},
\]
the inequality (\ref{num:4}) holds due to the Cauchy-Schwarz inequality and the inequality (\ref{num:5})
follows from (\ref{num:0}). For the case that there is an index $k$ such that $\|x^k\|=0$, we also have
\begin{equation}
v({\rm P})\leq\omega_k(n^{1-\frac{2}{p}}-2(x^k)^Tx^*+\|x^k\|_2^2)=\omega_k\cdot n^{1-\frac{2}{p}}<2\omega_k(n^{1-\frac{2}{p}}+\|x^k\|_2^2).\label{num:6}
\end{equation}

Thus, it follows from  (\ref{num:1}), (\ref{num:00}),  (\ref{num:5}) and (\ref{num:6}) that
\begin{equation}
\min_{i=1,\ldots,m} \omega_i\|\tilde{x}-x^i\|_2^2>\frac{1-\alpha\cdot n^{-\frac{1}{2}}}{2}\cdot v({\rm P}). \label{final}
\end{equation}
Substituting $\alpha=\sqrt{2\ln(m/\rho)}$ in (\ref{final}) completes the proof.
\end{proof}

\begin{remark}
Theorem \ref{thm1} implies that Algorithm 2 provides a $1/2$ asymptotic approximation bound for (${\rm P}$) as $\frac{n}{\ln(m)}$ increases to infinity.
\end{remark}

\section{Numerical Experiments}
In this section, we numerically compare our new
simple approximation algorithm (Algorithm 2) with the SDP-based algorithm proposed in \cite{HA13} (i.e., Algorithm 1 in this paper) for solving (P).
In Algorithm 1,  we use SDPT3 within CVX \cite{GrB} to solve  $({\rm SDP})$.
All the numerical tests are constructed in MATLAB R2013b and carried out on a laptop computer with $2.3$ GHz processor and $4$ GB RAM.

First, we fix $p=3$, $n=20$, and $\omega_1=\omega_2=\ldots=\omega_m=1$.
Then, we randomly generate the test instances where $m$ varies in $\{10, 12, \ldots, 30\}$.
Since all of the input points $x^i~(i=1,\ldots,m)$ with $m=10,12,\ldots,30$ form an orderly $n\times 220$ matrix. We generate this random matrix  using the following Matlab scripts:
\begin{verbatim}
              rand('state',0); X = 2*rand(n,220)-1;
\end{verbatim}
For each  instance, we independently run each of the two algorithms
$20$ times with the same setting $\rho=0.9999$ and then plot the objective values of the returned approximation solutions in Figure 1.

Second, we fix $n=20,~m=30$,  $\omega_1=\omega_2=\ldots=\omega_m=1$, and let $p=2,3,5,10,20,50,100,200,500,800,1000$. The total $30$ input points $x^i~(i=1,\ldots,m)$ are generated using the following Matlab scripts:
\begin{verbatim}
              rand('state',0); X = 2*rand(n,30)-1;
\end{verbatim}
Both Algorithm 1 and Algorithm 2 are then independently implemented $20$ times for each $p$ with the same setting $\rho=0.9999$. We plot the objective values of the returned approximation solutions in Figure 2.

According to Figures 1 and 2,  the qualities of the approximation solutions returned by Algorithm 2 are in general much higher than those generated by Algorithm 1. The practical performance demonstrates that at least for finding approximation solutions of (P), the SDP relaxation is misleading.

\begin{figure}[h]
\centering
  \includegraphics[width=9cm]{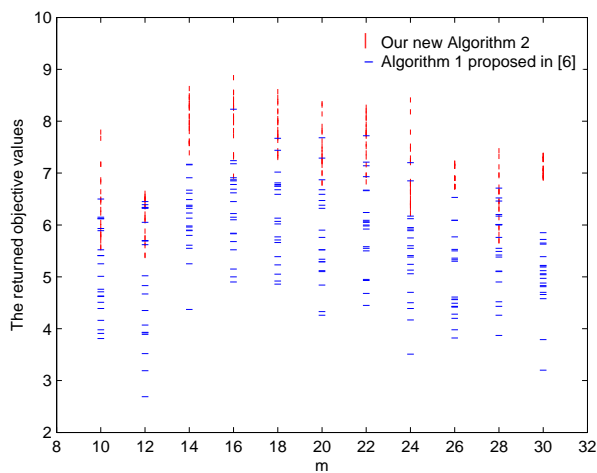}
  \caption{Approximation solutions returned by the SDP-based Algorithm 1 and our new simple Algorithm 2 with different $m$.
 }
  \label{fig1}
\end{figure}
 \begin{figure}[h]
\centering
  \includegraphics[width=9cm]{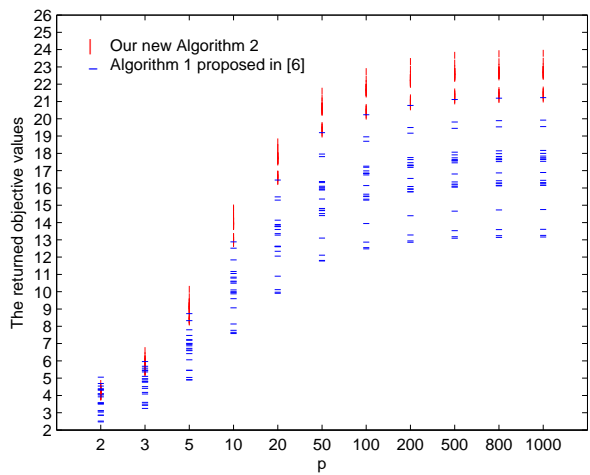}
  \caption{Approximation solutions returned by the SDP-based Algorithm 1 and our new simple Algorithm 2 with different $p$.
 }
  \label{fig2}
\end{figure}

\section{Conclusions}
In this paper, we propose a new simple approximation algorithm for the $\ell_p$-ball constrained weighted maximin dispersion problem (P). It is inherited from the existing SDP-based algorithm by removing the SDP relaxation and then trivially replacing the optimal solution of the SDP relaxation with a particular scalar matrix. Surprisingly,
the simplified algorithm can provide the first unified approximation bound of $\frac{1-O\left(\sqrt{ \ln(m)/n}\right)}{2}$ for any $2\le p\le +\infty$, which remains open up to now except for the special cases $p=2$ and $p=+\infty$. Numerical results also imply that the SDP relaxation technique is misleading in approximately solving (P). Finally, we raise a question whether the unified approximation bound can be extended to  (P) with $1\le p<2$.

\end{document}